\documentclass{ws-jbs}
\usepackage[super,compress]{cite}
\usepackage{url}
\usepackage{caption}
\usepackage{subcaption}
\usepackage{xcolor}
\newtheorem{teo}{Theorem}[section]

\newtheorem{pro}[teo]{Proposition}

\newtheorem{ejem}{Example}
\usepackage{graphicx}
\usepackage{amsmath}
\usepackage{amssymb}
\usepackage{appendix}

\begin{document}

\markboth{Jim\'enez, Bl\'e, Sep\'ulveda-Quiroz, Mart\'inez-Garc\'ia, \'Alvarez-Gonz\'alez}
{A mathematical model for captive fish with cannibalism}


\title{A MATHEMATICAL MODEL FOR CAPTIVE FISH WITH CANNIBALISM.}

\author{MAR\'IA FERNANDA JIM\'ENEZ}
\address{Tecnol\'ogico Nacional de México-Instituto Tecnol\'ogico Superior de Comalcalco\\ Carretera Vecinal Comalcalco-Paraiso Km. 2, Rancheria Occidente 3ra. Secci\'on\\ Comalcalco, Tabasco, c.p. 86651, M\'exico\,\\
\email{mfernanda\_jimenez@hotmail.com}
}

\author{GAMALIEL BL\'E\footnote{Corresponding author.}}
\address{Divisi\'on Acad\'emica de Ciencias B\'asicas, Universidad Ju\'arez Aut\'onoma de Tabasco\\ Carretera Cunduac\'an-Jalpa de M\'endez, Km. 1\\ Cunduac\'an, Tabasco, c.p. 86690, M\'exico.\,\\
	\email{gble@ujat.mx}
}

\author{C\'ESAR ANTONIO SEP\'ULVEDA-QUIROZ}
\address{Tecnol\'ogico Nacional de M\'exico-Instituto Tecnol\'ogico de Villahermosa\\ Carretera Villahermosa-Frontera, Km. 3.5, Ciudad Industrial\\ Villahermosa, Tabasco, c.p. 86010, M\'exico.\,\\
	\email{casq15@gmail.com}
}

\author{RAFAEL MART\'INEZ-GARC\'IA$^{+}$}
\author{CARLOS ALFONSO \'ALVAREZ-GONZ\'ALEZ$^{++}$}
\address{Laboratorio de Fisiolog\'ia en Recursos Acu\'aticos (LAFIRA), \\Divisi\'on Acad\'emica de Ciencias Biol\'ogicas, Universidad Ju\'arez Aut\'onoma de Tabasco, Carretera Villahermosa-C\'ardenas Km 0.5 \\Villahermosa, Tabasco, C.P.86039, M\'exico.\,\\
	\email{biologomartinez@hotmail.com$^{+}$}
		\email{alvarez\_alfonso@hotmail.com$^{++}$}
}

\maketitle


\begin{abstract}
A mathematical model is analyzed for a population of fish exhibiting cannibalism during its larval stage. The population is divided into three classes (vulnerable, cannibals, and non-vulnerable). It is assumed that initially, the entire population is vulnerable, and as they develop, individuals transition into one of the other classes. In addition, the interaction between the non-vulnerable and the cannibals is a predator-prey relationship with a Beddington-DeAngelis functional response. The global stability of the equilibrium point is demonstrated and numerical simulations show how food quality influences the cannibalistic behaviour of the population and its survival. Furthermore, it is shown that the solutions fit the data obtained experimentally for \textit{Atractosteus tropicus} in its larval state. This model can be adapted for other fish species that exhibit cannibalism.
\end{abstract}

\keywords{Cannibalism; Mathematical model; \textit{Atractosteus tropicus}.}

\section{Introduction}

South America has a diverse fish fauna, there are more than 5,000 species of freshwater fish, which represents one-third of the world's freshwater fish species \cite{Re}. Since fish is a dietary staple for humans, {it has lead to the expansion} of breeding of fish in natural or artificial ponds, and this is why the aquaculture was born. The success of aquaculture as an industry depends on advancements in larviculture \cite{Ga}, which aims to increase survival rates during the transition from larvae to juveniles, through appropriate environmental conditions and a proper feeding. However, one of the principal causes of mortality during this stage is cannibalism \cite{Sw,Ga}.\\
Cannibalism is the act or practice of consuming members of the same species \cite{Ba}. This behavior is a specific type of predation that involves attacking to its conspecific to partially or entirely consume it \cite{Sm,Pf,Ga}. Cannibalism is common in fish and is considered one of the main causes of mortality in the larval stage \cite{Do,Ba,Ga}. Approximately 390 fish species have been recorded to exhibit some form of cannibalism, of which only 150 display this behavior in captivity. In most species, cannibalism tends to occur throughout their lifetime, while in others, it is only observed during the early stages of development (larvae or juveniles) \cite{Pe}. Cannibalism is often associated with size variation, limited food availability, high population density, lack of shelter areas, and lighting conditions, with the first two being the primary causes \cite{He,Qi,Sm,HP}. There are seven types of cannibalism, which are distributed across three criteria: the prey's developmental stage, the genetic relationship of the cannibal to the prey, and the age relationship between the cannibal and the prey. In the developmental stage criterion, there are egg cannibalism and post-hatching cannibalism. In the genetic relationship criterion, there are filial cannibalism, sibling cannibalism, and non-kin cannibalism. In the age relationship criterion, we find intracohort and intercohort cannibalism  \cite{Sm}. For example,  \textit{Cyprinodon diabolis} and \textit{Coleomegilla maculata lengi} exhibit egg cannibalism \cite{Bu,Gg}, \textit{Lates calcarifer, Centropomus undecimalis} and \textit{Atractosteus tropicus} display post-hatching and intracohort cannibalism \cite{Go,Ha,Se}, \textit{Gasterosteus aculeatus} is a fish that shows filial and intercohort cannibalism \cite{Ro}, \textit{Brycon moorei} and \textit{Cyprinus carpio} are fish that exhibit sibling cannibalism \cite{BN,Va}. Cannibalism is considered a feeding strategy that ensures the survival of a species since it reduces intraspecific competition when there is limited food availability \cite{Lo}. On the other hand, it has been observed that cannibalism can be controlled through proper management of food supply \cite{He,Pi}. \\
{
	The tropical gar (\textit{Atractosteus tropicus}) is a species cataloged as ancestral that lives in Central American countries and in southeastern Mexico. This species is considered of great economic and cultural importance in the region \cite{Mc1,N}. It is the ideal fish for sustainable tropical aquaculture, as it grows rapidly, thrives in low dissolved oxygen environments, tolerates high densities at all stages of cultivation, is carnivorous but adapts well to balanced feed, and can be cultivated with other fish to reduce production costs. Moreover, it is disease-resistant, and cannibalism only occurs in the larval stage and to a lesser extent in the juvenile stage \cite{Mc}. The tropical gar is not born cannibal, but some become cannibals in their larval or juvenile stage. Some factors that have been reported in relation to cannibalism in the larval stage are culture density, variability in size or color between larvae, insufficient feeding or low nutritional value, and the presence of abnormally larvae \cite{Mc}. Therefore, it has been important to implement strategies to reduce cannibalism and increase survival. These strategies include managing the cannibals and adjusting the diet \cite{F,Mc1}. Experiments carried out to measure the effect of feeding on cannibalism showed that in the larval stage a survival of 32\% can be achieved \cite{P}. Since cannibalism in the \textit{Atractosteus tropicus} occurs mainly in the first days of its larval state, the goal of this work is to propose a mathematical model based on differential equations, consistent with the experimental results, that contributes to improving the survival strategies of this species in its larval stage.}

Cannibalism involves an intraspecific predator-prey interaction \cite{Fo,Po}. Mathematical predator-prey models that include intraspecific competition have been used to model it \cite{Al,CD,Cl,Di,Ko,La,Li,Ma,Ta,Za}. For example, Al Basheer, \textit{et al.}  proposed a Holling-Tanner type predator-prey model considering cannibalism in both the prey and predator populations. Specifically, the model is 

\begin{equation*}
	\begin{aligned}
		\dfrac{du}{dt}&=u(1+c_1-u)-\dfrac{uv}{u+\alpha v}-c\left(\dfrac{u^2}{u+d}\right),\\
		\dfrac{dv}{dt}&=\delta v\left(\beta-\dfrac{v}{\gamma u+\rho v}\right),
	\end{aligned}
\end{equation*}

where $u$ and $v$ represent the prey and predator populations, respectively. In this case, $c\left(\dfrac{u^2}{u+d}\right)$ models cannibalism in the prey, $c_1$ represents the benefit obtained by the prey through cannibalism, and $\rho v$ denotes cannibalism in the predators \cite{Al}. They showed that depending on the parameters, unstable equilibrium points present when cannibalism is in a single population can become stable when cannibalism is present in both populations. On the other hand, Takyi E. {\it et al.} presented a predator-prey model, separating the prey population into two groups, juveniles and adults. They assumed the presence of cannibalism in the juvenile population. They showed that cannibalism has both a stabilizing and destabilizing effect depending on the choice of parameters \cite{Ta}.\\

Kaewmanee and Tang analyzed cannibalism within a predator-prey system. They divided the predator population into two classes (adult and juvenile) and assumed that cannibalism occurs from the adult class towards the juvenile class. They proved that there are parameter conditions under which the system has a stable coexistence equilibrium point. Furthermore, they showed that as the cannibalism rate increases and reaches a threshold value, this equilibrium point loses its stability, leading to the extinction of the predator population \cite{KT}.\\

In order to establish a system of differential equations that models the density in a pond of a population of \textit{Atractosteus tropicus} in its larval state, we are going to divide the population into three subclasses, the cannibal population ($y$), the vulnerable population ($x$), and the non-vulnerable population ($z$). Additionally, we establish the following hypotheses.
\begin{enumerate}
	\item {We assume that the entire population of larvae is vulnerable and as the days go by it can move to the cannibal class or develop and move to the non-vulnerable class.}
	\item The cannibal population is composed of those larvae that are larger than the vulnerable ones or have morphological characteristics suitable for consuming another of their own species.
	\item The non-vulnerable population consists of those larvae that develop and have morphological conditions that prevent them from being consumed and consuming individuals of their own species.
	\item {The vulnerable population and the cannibal population have a predator-prey relationship with a Beddington-DeAngelis type functional response}.
\end{enumerate}
Explicitly, the model is

\begin{equation}
	\begin{aligned}
		\dot{x}&=-\dfrac{(1-\alpha_1)\beta x y}{1+x+ay}-(1-\alpha_1)   (1-m)\beta x-\rho_1 x-\mu x,\\
		\dot{y}&=(1-\alpha_1)(1-m) \beta x-\alpha_1 \gamma  y^2-\rho_2 y-\mu  y,\\
		\dot{z}&=\alpha_1 m (n-x-z)(\rho_1 x+\rho_2 y)-\mu z,
	\end{aligned}
\end{equation}\label{modelo}
where
\begin{itemize}
	\item $x$ is the proportion of the vulnerable population.
	\item $y$ is the proportion of the cannibal population.
	\item $z$ is the proportion of the non-vulnerable population.
	\item $m$ is the parameter that measures the management time, {it is the proportion of time spent supervising and isolating cannibals, $m\in (0,1)$}.
	\item $a$ is the parameter that measures the interference among the cannibals.
	\item $n$ is the initial vulnerable population.
	\item $\alpha_1$ is the parameter that measures the quality of the administered food, $\alpha_1 \in (0,1)$.
	\item $\mu$ is the mortality rate of larval \textit{Atractosteus tropicus}.
	\item $\beta$ is the rate at the vulnerable population becomes cannibal.
	\item {$\rho_1$ and $\rho_2$ are the rates at which vulnerable and cannibal populations become non-vulnerable, respectively}.
	\item $\gamma$ represents the cannibalism within the cannibal population.
	\item The function $\dfrac{(1-\alpha_1)\beta x }{1+x+ay}$ measures the attack of the cannibals (predator) on the vulnerable population (prey). This takes into account satiation and interference in the predator.
	\item {The factor $n-x-z$ in the population growth rate $ z $ determines the maximum growth threshold of the non-vulnerable population.}
\end{itemize}
{We assume that all parameters of the system (\ref{modelo}) are positive and we are interested in the behavior of the solutions in the first units of time.  
}

\section{Stability Analysis}
For the analysis of the system (\ref{modelo}), we remember that $m,\alpha_1\in(0,1).$ \\
{ Let $$T=\left\lbrace (x,y,z)\in \mathbb{R}^3\, :\, 0 \leq x \leq n,\, 0 \leq y \leq n,\, 0 \leq z \leq n \right\rbrace. $$
	\begin{pro}\label{CPI}
		If $\beta\leq\dfrac{n\gamma \alpha_1+\mu+\rho_2}{(1-m)(1-\alpha_1)}$ then $T$ is positively invariant for system (\ref{modelo}).
	\end{pro}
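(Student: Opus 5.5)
The plan is to check the direction of the vector field on each face of the cube $T$ and then apply a standard invariance criterion for closed convex sets. I will use Nagumo's tangency theorem: if $f$ is locally Lipschitz near $T$ and $f(p)$ lies in the tangent cone of $T$ at every boundary point $p$, then $T$ is positively invariant. Write $f=(f_1,f_2,f_3)$ for the right-hand side of (\ref{modelo}). Since $1+x+ay\geq 1$ on a neighbourhood of $T$, the field $f$ is smooth there, so solutions exist and are unique.

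For a box, the tangent-cone condition reduces to a sign check, coordinate by coordinate, on the active faces. At a boundary point, each coordinate sitting at $0$ must have a nonnegative derivative, and each coordinate sitting at $n$ must have a nonpositive derivative. Edges and corners need no extra work, because the conditions for the faces meeting there are imposed independently.

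The six faces are handled one at a time, using only $0\le x,y,z\le n$ and $m,\alpha_1\in(0,1)$.
\begin{itemize}
\item On $x=0$, every term of $f_1$ contains the factor $x$, so $f_1=0$.
\item On $y=0$, we have $f_2=(1-\alpha_1)(1-m)\beta x\ge 0$.
\item On $z=0$, we have $f_3=\alpha_1 m(n-x)(\rho_1x+\rho_2y)\ge 0$, since $x\le n$.
\item On $x=n$, every term of $f_1$ is nonpositive, so $f_1\le 0$.
\item On $z=n$, we have $f_3=-\alpha_1 m\,x(\rho_1x+\rho_2y)-\mu n<0$.
\end{itemize}

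The remaining face $y=n$ is the only one where the hypothesis enters, and it is the main step. There, using $x\le n$,
\[
f_2=(1-\alpha_1)(1-m)\beta x-\alpha_1\gamma n^2-(\rho_2+\mu)n\le n\bigl[(1-\alpha_1)(1-m)\beta-n\gamma\alpha_1-\rho_2-\mu\bigr].
\]
The assumption $\beta\le \dfrac{n\gamma\alpha_1+\mu+\rho_2}{(1-m)(1-\alpha_1)}$ is exactly the statement that the bracket is nonpositive, so $f_2\le 0$ on this face.

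The only delicate point is that several of these inequalities are non-strict, so a naive argument of the form "the field points strictly inward" does not apply. This is precisely why I invoke Nagumo's theorem, which only requires the weak tangency conditions verified above.

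As an alternative, one could perturb to strict inequalities and pass to a limit. Set $T_\varepsilon=[-\varepsilon,n+\varepsilon]^3$ and argue by first exit time, or use a comparison argument on each coordinate. However, the Nagumo route on the convex set $T$ is cleaner. Combining the six face checks with Nagumo's theorem then gives that every solution starting in $T$ remains in $T$ for all $t\ge 0$ in its interval of existence. Global existence follows as well, because solutions stay in the compact set $T$.
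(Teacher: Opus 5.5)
Your proposal is correct and follows the same route as the paper: a sign check of the vector field on each of the six faces of the cube $T$, with the hypothesis on $\beta$ used only on the face $y=n$. Your appeal to Nagumo's tangency theorem adds rigor the paper lacks. The paper asserts strict inequalities that fail on some edges (e.g.\ $f_2=0$ on $y=0$ when $x=0$), and on $y=n$ it uses $\beta<\cdots$ although the statement allows equality; your weak tangency conditions cover the hypothesis exactly as stated.
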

	The proof of this result can be seen in Appendix \ref{A1}.}

Since  system (\ref{modelo}) models the interaction between  $x$, $y$, and $z$, we will study the solutions that are entirely in the positive octant.
Solving sytem (\ref{modelo}) we have that the only equilibrium point in this octact is $P_0=(0,0,0)$ and the Jacobian matrix evaluate at $P_0$ is
$$J(P_0)=\begin{pmatrix}
	-(1-m)(1-\alpha_1)\beta-\mu-\rho_1&0&0\\
	(m-1)(\alpha_1-1)\beta&-\mu-\rho_2&0\\
	mn\alpha_1\rho_1&mn\alpha_1\rho_2&-\mu
\end{pmatrix}.$$
The eigenvalues corresponding to $J(P_0)$ are
$$\lambda_1=-(1-m)(1-\alpha_1)\beta-\mu-\rho_1,\,\quad\lambda_2=-\mu-\rho_2,\quad \lambda_3=-\mu.$$
Thus, we have the following proposition.
\begin{pro}\label{EL}
	The trivial equilibrium point $P_0$  is locally asymptotically stable.
\end{pro}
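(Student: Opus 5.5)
The plan is to use the linearization principle (Hartman--Grobman, or Lyapunov's indirect method). Local asymptotic stability of a hyperbolic equilibrium follows once every eigenvalue of the Jacobian at that point has negative real part. So it suffices to confirm that $P_0$ is an equilibrium, compute $J(P_0)$, and check the signs of its eigenvalues.

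First, I will check that $P_0=(0,0,0)$ is an equilibrium of system (\ref{modelo}). Substituting $x=y=z=0$, every term in each of the three equations vanishes; in the $\dot z$ equation the factor $\rho_1x+\rho_2y$ is zero.

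Second, I will compute the Jacobian at $P_0$. The only delicate term is the Beddington--DeAngelis response $\frac{(1-\alpha_1)\beta xy}{1+x+ay}$. Its partial derivatives with respect to $x$ and $y$ both contain the factor $y$ or $x$, respectively, so they vanish at the origin; the denominator equals $1$ there, so nothing is singular. Likewise $\partial_y(\alpha_1\gamma y^2)=2\alpha_1\gamma y$ vanishes at $y=0$. For the $\dot z$ equation, the only surviving contributions are
\[
\partial_x = \alpha_1 m n\rho_1, \qquad \partial_y = \alpha_1 m n\rho_2, \qquad \partial_z = -\mu ,
\]
because every other term carries a factor $\rho_1x+\rho_2y$. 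This reproduces the lower triangular matrix $J(P_0)$ displayed above.

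Third, since $J(P_0)$ is lower triangular, its eigenvalues are its diagonal entries $\lambda_1,\lambda_2,\lambda_3$ listed above. Because $m,\alpha_1\in(0,1)$, we have $(1-m)(1-\alpha_1)\beta>0$, and all remaining parameters are positive. Hence $\lambda_1<0$, $\lambda_2=-\mu-\rho_2<0$ and $\lambda_3=-\mu<0$. The equilibrium is therefore hyperbolic with all eigenvalues negative, and the linearization theorem gives local asymptotic stability; it is in fact exponential stability.

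There is no real obstacle here. The only point needing care is verifying the derivatives of the rational term and of the product term in $\dot z$ at the origin, so that the Jacobian really is triangular with the stated diagonal. The sign condition then depends only on the standing assumptions that $m,\alpha_1\in(0,1)$ and that all parameters are positive.
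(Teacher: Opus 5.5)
Your proposal is correct and follows the paper's argument: the paper computes the lower-triangular Jacobian $J(P_0)$, reads off its diagonal eigenvalues $\lambda_1,\lambda_2,\lambda_3$, and concludes local asymptotic stability because they are negative when $m,\alpha_1\in(0,1)$ and the parameters are positive. Your check that the Beddington--DeAngelis term and the product term in $\dot z$ contribute no entries above the diagonal at the origin supplies the computation the paper leaves implicit.
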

{In fact, we prove the following result in Appendix \ref{A2}. } 
\begin{teo}\label{GS}
	The equilibrium point $P_0$ is globally asymptotically stable.
\end{teo}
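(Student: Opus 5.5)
The system is triangular, so the plan is to exploit that structure rather than build a global Lyapunov function for all three variables at once. We work on the nonnegative octant, which is forward invariant: on $x=0$ we have $\dot x=0$; on $y=0$ we have $\dot y=(1-m)(1-\alpha_1)\beta x\ge 0$; on $z=0$ we have $\dot z\ge 0$ whenever $x\le n$. For solutions starting in $T$, Proposition \ref{CPI} gives boundedness, so solutions exist for all $t\ge 0$.

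\textbf{Step 1: $x\to 0$.} The $x$-equation decouples in sign. For $x,y\ge 0$,
$$\dot x\le -\bigl((1-\alpha_1)(1-m)\beta+\rho_1+\mu\bigr)x=:-k_1x.$$
Hence $0\le x(t)\le x(0)e^{-k_1 t}$, so $x$ decays exponentially. The Lyapunov function $V_1=x$ gives this directly.

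\textbf{Step 2: $y\to 0$.} Since $\alpha_1\gamma y^2\ge 0$,
$$\dot y\le (1-\alpha_1)(1-m)\beta\,x(t)-(\rho_2+\mu)y.$$
By variation of constants (a comparison lemma),
$$y(t)\le y(0)e^{-k_2t}+c\int_0^t e^{-k_2(t-s)}e^{-k_1 s}\,ds,\qquad k_2=\rho_2+\mu,$$
and the right-hand side tends to $0$ exponentially. Alternatively, the combination $V_2=x+y$ gives $\dot V_2\le -\min(\rho_1+\mu,\rho_2+\mu)V_2$, because the transfer term $(1-\alpha_1)(1-m)\beta x$ cancels and the remaining terms are nonpositive. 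This yields exponential decay of $x+y$ in one line, and is the cleanest route.

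\textbf{Step 3: $z\to 0$.} Write the $z$-equation as
$$\dot z=-\bigl(\mu+\alpha_1 m(\rho_1x+\rho_2y)\bigr)z+\alpha_1 m(n-x)(\rho_1x+\rho_2y).$$
The forcing term $g(t)=\alpha_1 m(n-x)(\rho_1x+\rho_2y)$ is nonnegative and bounded by $Ce^{-\kappa t}$, and the coefficient of $z$ is at least $\mu$. Therefore
$$0\le z(t)\le z(0)e^{-\mu t}+\int_0^t e^{-\mu(t-s)}g(s)\,ds\to 0.$$
This establishes global attractivity of $P_0$ on the invariant region.

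Combined with the local asymptotic stability in Proposition \ref{EL}, global attractivity gives global asymptotic stability. One can also conclude Lyapunov stability directly from the explicit exponential bounds above, which depend continuously on the initial data.

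The main obstacle is bookkeeping of the domain rather than analysis. The bound $z\ge 0$ and the sign of the forcing in Step 3 require $x\le n$, which follows from $\dot x\le 0$ as long as $x(0)\le n$. ``Global'' should therefore be stated on the biologically relevant set (the nonnegative octant with $x(0)\le n$, or $T$ under the hypothesis of Proposition \ref{CPI}). I would state the result on that set and, if needed, note that the estimates in Steps 1–3 hold for any nonnegative initial data with $x(0)\le n$.
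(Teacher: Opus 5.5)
Your proof is correct. After the first step, though, it takes a different route from the paper.

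Step 1 coincides with the paper's: $\dot x\le -C_0x$, then integrate.

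For $y$ and $z$ the paper argues qualitatively. It asserts that since $x\to 0$ there is a $t_0$ after which $\dot y<0$, and so obtains a monotone limit $y^*$. It rules out $y^*>0$ by an argument phrased via existence and uniqueness, and then treats $z$ ``in the same way.''

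You use linear comparison instead. For $y$, you exploit the cancellation of the transfer term in $V_2=x+y$ (or variation of constants). For $z$, you rewrite the equation as a linear one with coefficient at least $\mu$ and exponentially decaying forcing.

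Your route buys three things:
\begin{itemize}
\item explicit exponential rates;
\item an explicit treatment of Lyapunov stability, which the paper leaves to Proposition \ref{EL};
\item no reliance on the paper's eventual-monotonicity claims, which do not follow from $x\to 0$ alone.
\end{itemize}
At $y=0$, $x>0$ one has $\dot y>0$. So the paper actually needs an extra observation, such as $\ddot y=(1-\alpha_1)(1-m)\beta\,\dot x<0$ wherever $\dot y=0$ (for $x(0)>0$). It also needs the fact that $y\to y^*>0$ would force $\dot y\to-\alpha_1\gamma (y^*)^2-(\rho_2+\mu)y^*<0$. The paper's monotone-limit route is shorter and purely qualitative, but as written it is less complete than yours.

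Two simplifications are available to you.
\begin{itemize}
\item Global existence follows from your own a priori bounds, so you do not need Proposition \ref{CPI} or its extra hypothesis on $\beta$.
\item The restriction $x(0)\le n$ matters only for keeping $z\ge 0$, not for convergence. The $(x,y)$-subsystem does not involve $z$, and the coefficient $\mu+\alpha_1 m(\rho_1x+\rho_2y)$ stays $\ge\mu$. Hence the estimate $|z(t)|\le |z(0)|e^{-\mu t}+\int_0^t e^{-\mu(t-s)}|g(s)|\,ds$, with $|g|\le \alpha_1 m(n+x)(\rho_1x+\rho_2y)$, gives $z\to 0$ for every initial condition in $\mathbb{R}^3_+$. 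That is exactly the generality the paper claims.
\end{itemize}
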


Although all the solutions of system (\ref{modelo}) tend asymptotically to the origin, the important thing of the model is that in the first units of time, the solutions conform to the experimental data, corresponding to the \textit{Atractosteus tropicus} population in the first fourteen days of their larval state, as we will show in the next section. In addition, the model can be adapted for populations with similar behaviors.

\section{Numerical Simulations}
{Experimentally, it has been observed that cannibalism occurs after the eighth day of the larval state of the \textit{Atractosteus tropicus}, see Fig \ref{datos_reales}. Therefore, the survival curve is constant in the first 8 days, and simulations reflect what happens from the ninth day when cannibalism appears in the population. In the simulations, we assume that we start with 100\% of the larvae in the pond ($n=100$), and we will take the initial condition as $(100,0,0)$, which corresponds to initially having all larvae vulnerable.
	In this section, we show different scenarios of population behavior through numerical simulations.}
\begin{ejem}\label{Ejem_interferencia}
	Taking  $n=100$, $m=0.5$, $\alpha_1=0.5$, $\beta=1.69$, $\gamma=0.39$, $\mu=0.002$, $\rho_1=0.013$, and $\rho_2=0.018$ in  system (\ref{modelo}) and  considering no interference between the cannibals, we obtain a survival rate of {30\%}, as is shown in Figure \ref{a0}. However, if we consider an interference value of $a=2$, meaning high interference among the cannibals, we observe that survival increases but is still less than {40\%}, as shown in Figure \ref{a2}. From  simulations,
	we observe that by varying the interference we do not obtain survival rates greater than {40\%}.
	\begin{figure}[h]
		\centering
		\begin{subfigure}[b]{0.49\textwidth}
			\centering
			\includegraphics[width=\textwidth]{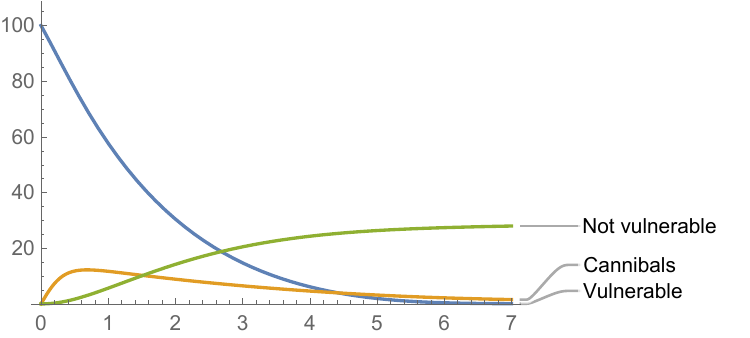}
			\caption{$a=0$}
			\label{a0}
		\end{subfigure}
		\begin{subfigure}[b]{0.49\textwidth}
			\centering
			\includegraphics[width=\textwidth]{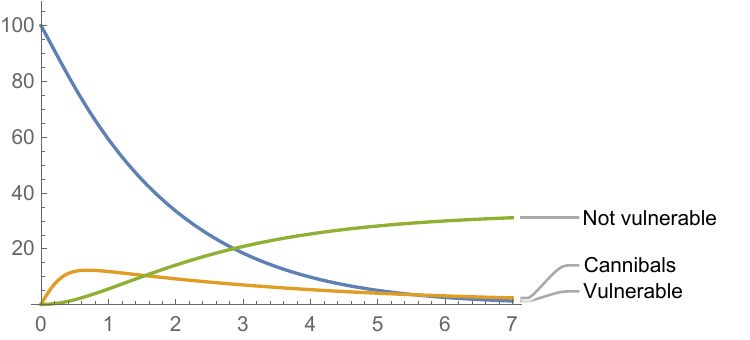}
			\caption{$a=2$}
			\label{a2}
		\end{subfigure}
		\caption{Time series of system (\ref{modelo}) with variation in cannibal interference.}
		\label{graficas_interferencia}
	\end{figure}
\end{ejem}

\begin{ejem}\label{Ejem_alimento}
	Fixing $a=2$, $n=100$, $m=0.5$, $\beta=1.69$, $\gamma=0.39$, $\mu=0.002$, $\rho_1=0.013$, and $\rho_2=0.018$ in system (\ref{modelo}). If we consider that the food is very unpalatable, i.e., $\alpha_1=0.05$, it can be observed that the survival rate is less than {10\%}, as is shown in Figure \ref{alp0.05}. While considering moderately acceptable food with $\alpha_1=0.5$, the survival rate doubles, reaching {30\%}, as shown in Figure \ref{alp0.5}. If we consider a food acceptance with $\alpha_1=0.7$, we obtain a higher survival rate of over {40\%}, as shown in Figure \ref{alp0.7}. These numerical results indicate that investing in the nutrition of \textit{Atractosteus tropicus} has a significant impact on reducing cannibalism within the species and increases the survival of the species, see Figure \ref{graficas_alimento}.
	\begin{figure}[h]
		\centering
		\begin{subfigure}[b]{0.49\textwidth}
			\centering
			\includegraphics[width=\textwidth]{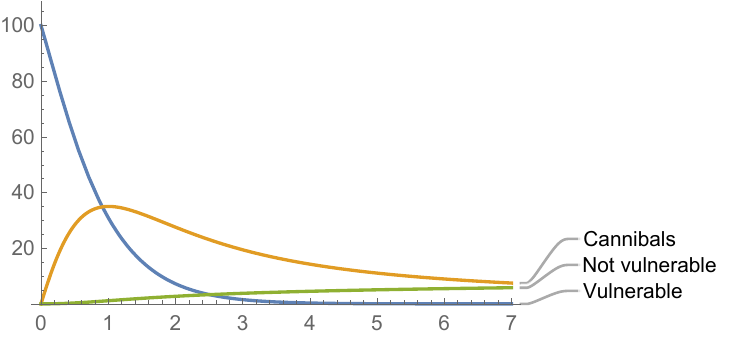}
			\caption{$\alpha_1=0.05$}
			\label{alp0.05}
		\end{subfigure}
		\begin{subfigure}[b]{0.49\textwidth}
			\centering
			\includegraphics[width=\textwidth]{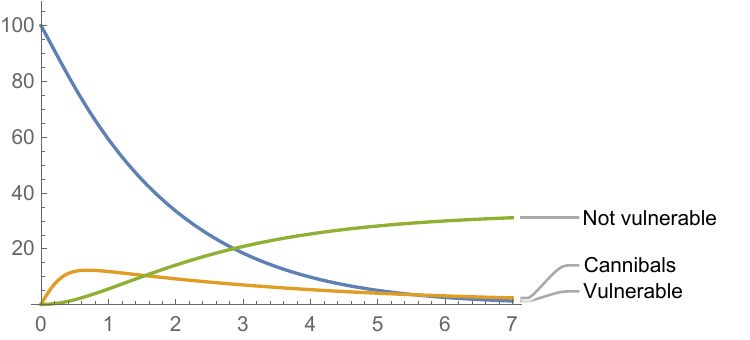}
			\caption{$\alpha_1=0.5$}
			\label{alp0.5}
		\end{subfigure}\vspace{0.3cm}
		\begin{subfigure}[b]{0.49\textwidth}
			\centering
			\includegraphics[width=\textwidth]{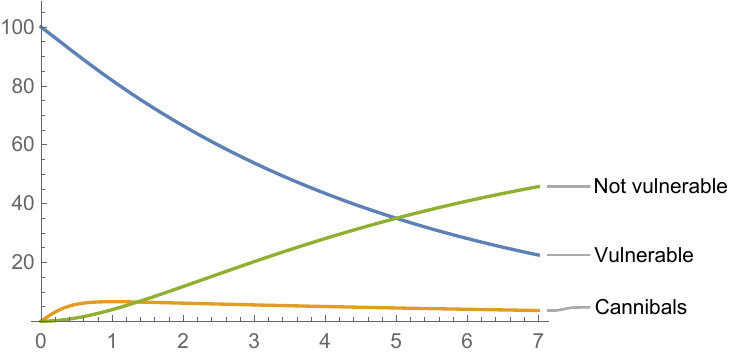}
			\caption{$\alpha_1=0.7$}
			\label{alp0.7}
		\end{subfigure}
		\caption{Variation in the time series of system (\ref{modelo}) according to the acceptance of \textit{Atractosteus tropicus} to food.}
		\label{graficas_alimento}
	\end{figure}
\end{ejem}

\begin{ejem}\label{Ejem_manejo}
	Taking $a=2$, $n=100$, $\alpha_1=0.5$, $\beta=1.69$, $\gamma=0.39$, $\mu=0.002$, $\rho_1=0.013$, and $\rho_2=0.018$ in system (\ref{modelo}) and considering two hours of management, i.e., $m=\dfrac{1}{12}$, we have a survival rate of approximately 5\%, as is shown in Figure \ref{m1_12}. If we increase it to 8 hours of management with $m=\dfrac{1}{3}$, we can see that the survival rate rises to 20\%, as shown in Figure \ref{m1_3}. By considering 16 hours of management with $m=\dfrac{2}{3}$, a survival rate of {40\%} is achieved, as shown in Figure \ref{m2_3}. In order to achieve a survival rate of {45\%}, at least 18 hours of management are required, i.e., $m=\dfrac{3}{4}$,  see Figure \ref{m3_4}. From the numerical simulations, it can be concluded that management has a significant impact on reducing cannibalism but comes at a high cost in terms of the hours of work required to invest, see Figure \ref{graficas_manejo}.
	\begin{figure}[h]
		\centering
		\begin{subfigure}[b]{0.49\textwidth}
			\centering
			\includegraphics[width=\textwidth]{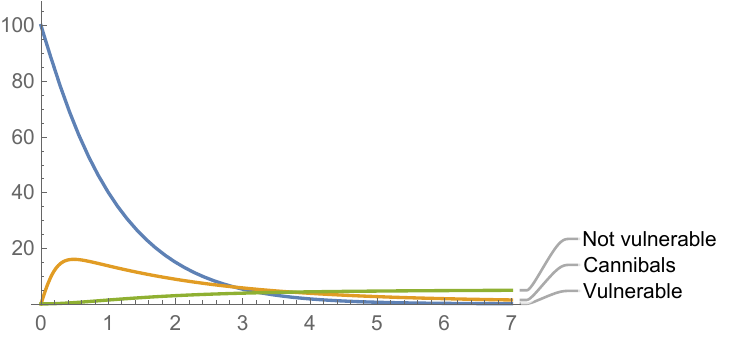}
			\caption{$m=\dfrac{1}{12}$}
			\label{m1_12}
		\end{subfigure}\vspace{0.3cm}
		\begin{subfigure}[b]{0.49\textwidth}
			\centering
			\includegraphics[width=\textwidth]{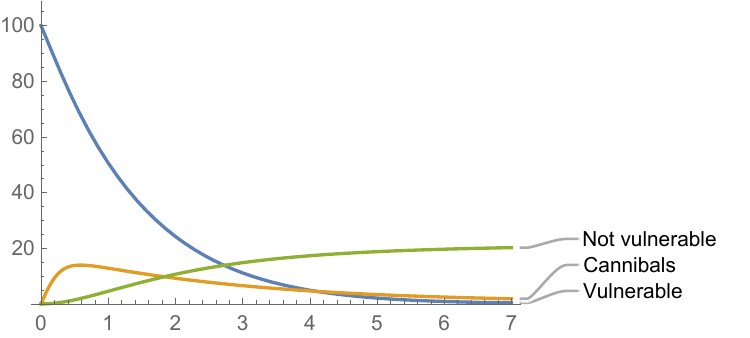}
			\caption{$m=\dfrac{1}{3}$}
			\label{m1_3}
		\end{subfigure}\vspace{0.3cm}
		\begin{subfigure}[b]{0.49\textwidth}
			\centering
			\includegraphics[width=\textwidth]{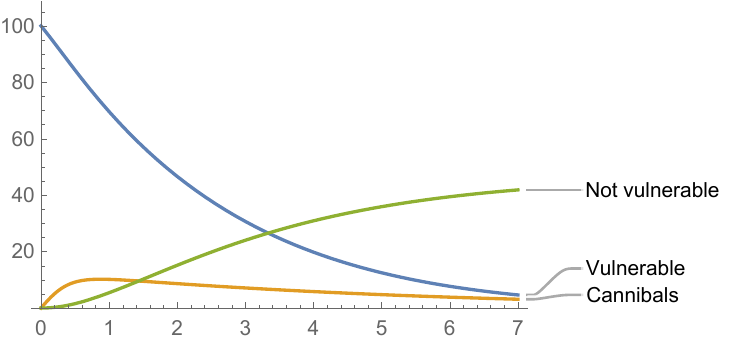}
			\caption{$m=\dfrac{2}{3}$}
			\label{m2_3}
		\end{subfigure}
		\begin{subfigure}[b]{0.49\textwidth}
			\centering
			\includegraphics[width=\textwidth]{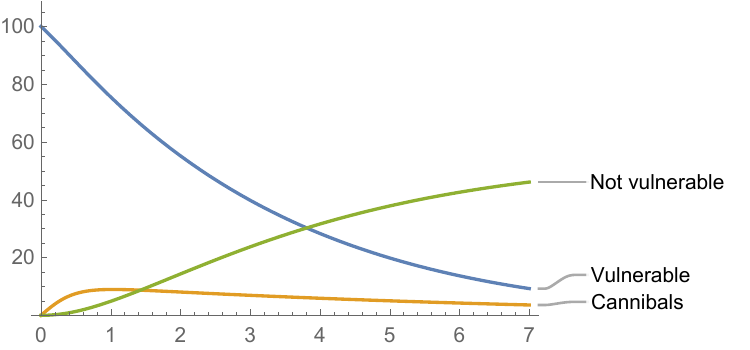}
			\caption{$m=\dfrac{3}{4}$}
			\label{m3_4}
		\end{subfigure}
		\caption{Variation in the time series of system (\ref{modelo}) according to the hours of management invested.}
		\label{graficas_manejo}
	\end{figure}
\end{ejem}
{In Figure \ref{superficies_supervivencia} we show the survival levels obtained by setting  $a=2$, $n=100$, $\alpha_1=0.5$, $m=0.5$, $\beta=1.69$, $\gamma=0.39$, $\mu=0.002$, $\rho_1=0.013$, $\rho_2=0.018$, making variations in the cannibalism rate $\beta$, in the quality of the food $\alpha_1$ and in the handling time of the cannibals $m$. In the first Figure \ref{comida} we can see how the survival percentage increases as the quality of the food improves and the rate of larvae that become cannibals decreases and in the second Figure \ref{manejo} we can see how the survival percentage increases as more time is invested in the surveillance and isolation of the cannibals, as well as the decrease in the rate of larvae that become cannibals. In both graphs 90\% survival is achieved when you have a low cannibalism rate and a high quality of food or a high handling time.\\
	In order to measure the impact of food on survival, an experiment was carried out with two populations of larvae, some that were fed and others that were not provided with food. In Figure \ref{ajuste_datos} it can be seen that survival is 100\% in the first 8 days, then cannibals begin to appear and the population begins to decrease. The difference between them is that in the one that is provided with food, survival stabilizes around 40\% on day 14, while in the other population it continues to decrease. In Fig. \ref{ajuste} we show how the solution of differential equation (\ref{modelo}) fits the experimental data. It is important to mention that the solutions of the differential equation model the drop in survival starting on day 9 when cannibalism appears, for the first days the solution is constant.
}
\begin{figure}[h]
	\centering
	\begin{subfigure}[b]{0.49\textwidth}
		\centering
		\includegraphics[width=\textwidth]{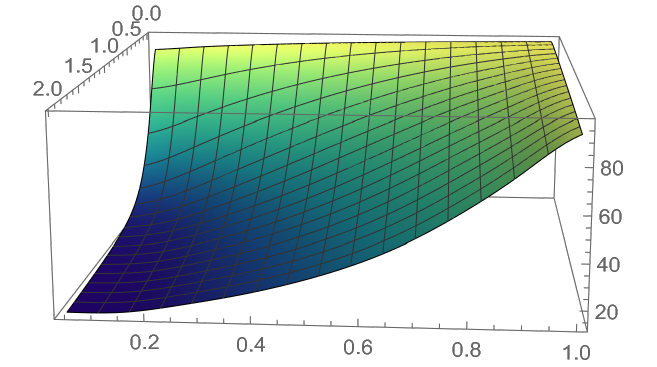}
		\put(-195,107) {{\footnotesize $\mathbf{\beta}$}}
		\put(-115,-5) {{\footnotesize $\alpha_1$}}
		\caption{{Survival fixing $m=0.5$ and  varying $\beta$ and $\alpha_1$.}}
		\label{comida}
	\end{subfigure}
	\begin{subfigure}[b]{0.49\textwidth}
		\centering
		\includegraphics[width=\textwidth]{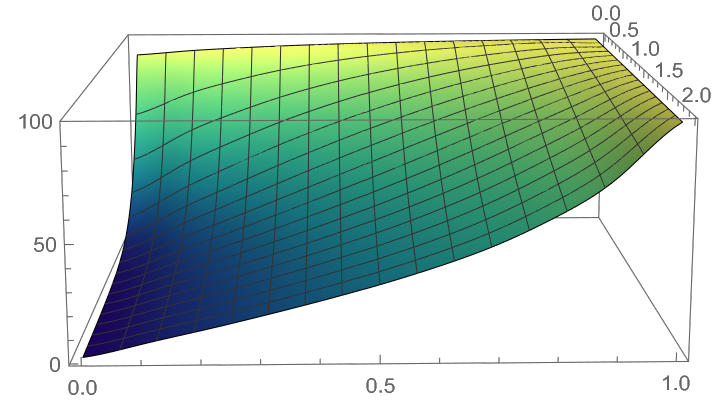}
		\put(-11,103) {{\footnotesize $\mathbf{\beta}$}}
		\put(-105,-5) {{\footnotesize $m$}}
		\caption{{Survival fixing $\alpha_1=0.5$ and  varying $\beta$ and $m$.}}
		\label{manejo}
	\end{subfigure}
	\caption{{Survival of the population setting $a=2$, $n=100$, $\beta=1.69$, $\gamma=0.39$, $\mu=0.002$, $\rho_1=0.013$ and $\rho_2=0.018$, making variations in the cannibalism rate $\beta$, food quality $\alpha_1$ and management of the cannibals $m$.}}
	\label{superficies_supervivencia}
\end{figure}

\begin{figure}[h]
	\centering
	\begin{subfigure}[b]{0.49\textwidth}
		\centering
		\includegraphics[width=\textwidth]{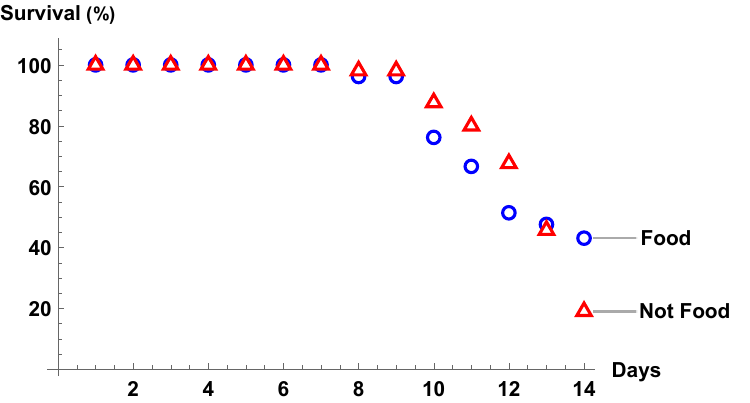}
		\caption{{Experimental data.}}
		\label{datos_reales}
	\end{subfigure}
	\begin{subfigure}[b]{0.49\textwidth}
		\centering
		\includegraphics[width=\textwidth]{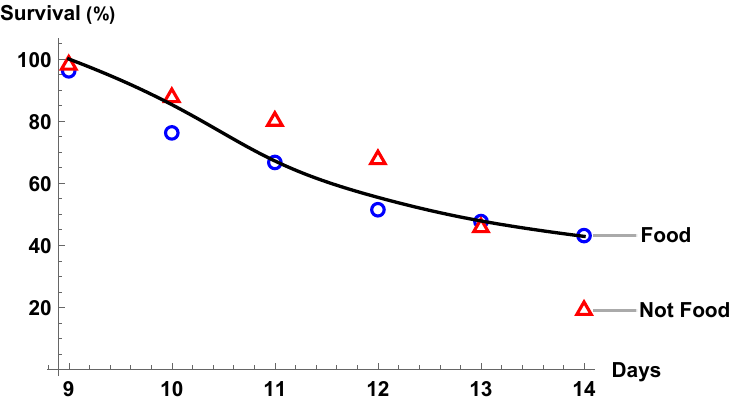}
		\caption{{Solution of model (\ref{modelo}) and experimental data.}}
		\label{ajuste}
	\end{subfigure}
	\caption{{Comparison of the experimental data with the solution of model (\ref{modelo}), fixing  $a=2$, $n=100$, $\alpha_1=0.5$, $m=0.5$, $\beta=1.69$, $\gamma=0.39$, $\mu=0.002$, $\rho_1=0.013$, $\rho_2=0.018$ and taking $(100,0,0)$ as the initial condition.}}
	\label{ajuste_datos}
\end{figure}

\section{Conclusions}
The experimental results show that in the larvae of the \textit{Atractosteus tropicus} population, the effect of cannibalism appears after the eighth day and is present until day 14, when the larval population reaches a size that inhibits the effect of cannibalism, stabilizing the survival of the species. In particular, it has been observed that in the absence of food, cannibalism is high, and survival drops to 20\%, while when food is provided to the larvae, survival has been increased to 40\%,  see Fig \ref{ajuste_datos}. The model analyzed in this study demonstrates that an improvement in the diet contributes to reducing the effect of cannibalism and increasing the survival rate. {It was also shown that an increase in the handling time of cannibals improves survival, see Fig \ref{superficies_supervivencia}. In the first eight days the solution is constant, then  the model time begins to run from the moment the presence of cannibalism exists (day 9), for this reason, Figure \ref{ajuste} only shows the solution from days 9 to 14. where day nine corresponds to time zero of the solution with initial condition (100,0,0). In this period of time, the model solutions adjust the experimental data and are also the most important in the study of the \textit{Atractosteus tropicus}  survival, since at the end of this period the  survival is  stabilized.} Therefore, even though the solutions tend asymptotically to zero, the model allows analyzing the survival of the \textit{Atractosteus tropicus} in the first fourteen days of their larval state and can be adapted to those fish species that have cannibalism in their first stages of growth. We are working in a control model that allows the evaluation and optimization of the cost of managing the cannibals and the quality of the food.

\section*{Acknowledgment}
The first author thanks CONAHCyT for the postdoctoral fellowship and Laboratorio de Fisiología en Recursos Acuaticos (LAFIRA).\\ We appreciate the comments of the reviewers, which helped us improve this work.

\section*{Appendix}
\section{Proof of Proposition \ref{CPI}}\label{A1}
\begin{proof}
	In order to prove the positive invariance of $T$, we will verify the vector field $\mathbf{V}$ given by (\ref{modelo}) in each one of the planes of the boundary of $T$.
	
	For the plane  $x=0$, we have $\mathbf{V}\cdot (1,0,0)=0$. Thus, the plane $x=0$ is invariant.\\ On the other hand, on the planes $y=0$ and $z=0$, we have
	$$\mathbf{V}\cdot (0,1,0)=(m-1)(\alpha_1-1) \beta x>0\quad\mbox{and}\quad \mathbf{V}\cdot (0,0,1)=\alpha_1 m(n-x)  (x \rho_1 + y \rho_2)>0,$$
	respectively.\\
	
	For the plane $x=n$, we have
	$$\mathbf{V}\cdot (1,0,0)=n\left(-\beta(m-1)(\alpha_1-1)+\dfrac{\beta y(\alpha_1-1)}{1+n+ay}-\mu-\rho_1\right)<0,$$
	when $m, \alpha_1 \in (0,1).$\\
	Now, on the plane $y=n$  we obtain that
	$$\mathbf{V}\cdot (0,1,0)=(m-1)(\alpha_1-1) \beta x - n (n \alpha_1 \gamma + \mu + \rho_2)<0,$$
	if $\beta<\dfrac{n\gamma \alpha_1+\mu+\rho_2}{(m-1)(\alpha_1 -1)}$.\\
	Finally, on the plane $z=n$ we have
	$$\mathbf{V}\cdot (0,0,1)=-n\mu-m\alpha_1 x(\rho_1 x+\rho_2 y)<0.$$
	
	Hence, the vector field $\mathbf{V}$ is entering each of the planes on the boundary of $T$. Therefore, the set $T$ is positively invariant for system (\ref{modelo}).
\end{proof}

\section{Proof of Theorem \ref{GS}}\label{A2}
\begin{proof}
	Let $(x_0, y_0, z_0) \in \mathbb{R}^3_+$ be an initial condition. From the first equation of the system (\ref{modelo}), we have
	$$\dot{x}\leq -C_0 x,$$
	where $C_0=(1-\alpha_1)(1-m)\beta+\rho_1+\mu>0$. Thus, we have
	\begin{equation*}
		\begin{aligned}
			\dot{x}+C_0x&\leq 0,\\
			\dfrac{d}{dt}\left(x\displaystyle e^{C_0t}\right)&\leq 0,\\
			x(t)\displaystyle e^{C_0t}&\leq x_0,
		\end{aligned}
	\end{equation*}
	Hence, we obtain that $x(t)\leq x_0e^{-C_0t}$. Therefore, $x(t)$ tends to zero, when  $t$ tends to infinity.\\
	From the second equation of (\ref{modelo}), since $x(t)$ goes to zero, there exists $t_0$ such that  $\dot{y}(t)<0$. 
	Then  $y(t)$ is decreasing for $t>t_0$. As $y(t)$ is  lower bounded, there exists $y^*$  such that  $\displaystyle\lim_{t\rightarrow\infty}y(t)=y^*$. Similarly, by the third equation, we conclude that there exists $z^*$  such that $\displaystyle\lim_{t\rightarrow\infty}z(t)=z^*$.\\
	In order to show that $y^*=0$, we assume that $y^* > 0$ and applying the theorem of existence and uniqueness to  system 
	(\ref{modelo}) with initial condition $(0, y^*, z^*)$,  we can extend the solution. Then, there exists $y_1$ in the solution of system (\ref{modelo}) passing by $(x_0,y_0,z_0)$, such that $y^* > y_1$, which is a contradiction. Therefore, $y(t)$ tends to zero as $t$ tends to infinity. 
	
	Since $x(t)$ and $y(t)$ tend to zero as $t$ approaches infinity, there exists $t_1>t_0 > 0$ such that in the third equation of (\ref{modelo})
	$$\alpha_1 m (n-x(t_1)-z(t_1))(\rho_1 x(t_1)+\rho_2 y(t_1))<\mu z(t_1),\quad \forall\, t>t_1,$$
	
	In the same way as for the function $y(t)$ we obtain that  $z(t)$ tends to zero as $t$ approaches infinity. 
	Therefore, $P_0$ is globally asymptotically stable.
\end{proof}


\section*{References}

\begin{thebibliography}{0}
\bibitem[1]{Al} Aladeen Al Basheer, Rana D. Parshad, Emmanuel Quansah, Shengbin Yu, and Ranjit Kumar Upadhyay. Exploring the dynamics of a holling-tanner model with cannibalism in both predator and prey population. \textit{International Journal of Biomathematics}, 11(01):1850010, 2018.

\bibitem[2]{Ga} V\'ictor Julio Atencio Garc\'ia and Evoy Zaniboni Filho. El canibalismo en la larvicultura de peces. \textit{Revista MVZ C\'ordoba}, 11:9–19, 2006.

\bibitem[3]{BN} E. Baras, M. Ndao, MYJ. Maxi, Denys Jeandrain, Jean-Pierre Thom\'e, P Vanadewalle, and Charles M\'elard. Sibling cannibalism in dorada under experimental conditions. i. ontogeny, dynamics, bioenergetics of cannibalism and prey size selectivity. \textit{Journal of Fish Biology}, 57(4):1001-1020, 2000.

\bibitem[4]{Ba} Etienne Baras and Malcolm Jobling. Dynamics of intracohort cannibalism in cultured fish. \textit{Aquaculture research}, 33(7):461–479, 2002.

\bibitem[5]{Bu} George C. Burg, Jaimie Johnson, Savannah Spataro, Amelia O’Keefe, Natasha Urbina, Georgina Puentedura, Matt Heuton, Sean Harris, Stanley D. Hillyard, and Frank van Breukelen. Care and propagation of captive pupfish from the genus cyprinodon: insight into conservation. \textit{Environmental Biology of Fishes}, 102:1015–1024, 2019.

\bibitem[6]{CD} Fengde Chen, Hang Deng, Zhenliang Zhu, and Zhong Li. Note on the persistence and stability property of a stage-structured prey-predator model with cannibalism and constant attacking rate. \textit{Advances in Difference Equations}, 2020(1):1-20, 2020.

\bibitem[7]{Cl} David Claessen and Andre M. de Roos. Bistability in a size-structured population model of cannibalistic fish-a continuation study. \textit{Theoretical Population Biology}, 64(1):49-65, 2003.

\bibitem[8]{Di} Odo Diekmann, RM. Nisbet, WSC. Gurney, and F. Van den Bosch. Simple mathematical models for cannibalism: a critique and a new approach. \textit{Mathematical Biosciences}, 78(1):21-46, 1986.

\bibitem[9]{Do} Shuozeng Dou, Tadahisa Seikai, and Katsumi Tsukamoto. Cannibalism in japanese flounder juveniles, paralichthys olivaceus, reared under controlled conditions. \textit{Aquaculture}, 182(1-2):149-159, 2000.

\bibitem[10]{Fo} Laurel R. Fox. Cannibalism in natural populations. \textit{Annual review of ecology and systematics}, 6(1):87–106, 1975.

\bibitem[11]{F} C.A. Fr\'ias-Quintana, C.A. \'Alvarez-Gonz\'alez, and G. M\'arquez-Couturier. Dise\~no de microdietas para el larvicultivo de pejelagarto \textit{atractosteus tropicus}, gill 1863. \textit{Universidad y ciencia}, 26(3):265-282, 2010. 

\bibitem[12]{Gg} Isabelle Gagn\'e, Daniel Coderre, and Yves Mauffette. Egg cannibalism by coleomegilla maculata lengi neonates: preference even in the presence of essential prey. \textit{Ecological Entomology}, 27(3):28-291, 2002.

\bibitem[13]{Go} Adnan Gora, K. Ambasankar, K.P. Sandeep, Saima Rehman, Deepak Agarwal, Irshad Ahamad, and Kasilingam Ramachandran. Effect of dietary supplementation of crude microalgal extracts on growth performance, survival and disease resistance of lates calcarifer (bloch, 1790) larvae. \textit{Indian Journal of Fisheries}, 66(1):64-72, 2019.

\bibitem[14]{Ha} Ron Hans, Ryan Schloesser, Nathan Brennan, Flavio Ribeiro, and Kevan L. Main. Effects of stocking density on cannibalism in juvenile common snook centropomus undecimalis. \textit{Aquaculture research}, 51(2):844–847, 2020.

\bibitem[15]{He} T. Hecht and S. Appelbaum. Observations on intraspecific aggression and coeval sibling cannibalism by larval and juvenile claias gariepinus (clariidae: Pisces) under controlled conditions. \textit{Journal of zoology}, 214(1):21-44, 1988.

\bibitem[16]{HP} Thomas Hecht and Anthony G. Pienaar. A review of cannibalism and its implications in fish larviculture. \textit{Journal of the World Aquaculture Society}, 24(2):246-261, 1993.

\bibitem[17]{KT} C. Kaewmanee and I.M. Tang. Cannibalism in an age-structured predator-preysystem. \textit{Ecological Modelling},  167(3):213–220, 2003.

\bibitem[18]{Ko} C. Kohlmeier and W. Ebenh{\"o}h. The stabilizing role of cannibalism in a predatorprey system. \textit{Bulletin of Mathematical Biology}, 57:401-411, 1995.

\bibitem[19]{La} R. Lavanya, S. Vinoth, K. Sathiyanathan, Zeric Njitacke Tabekoueng, P. Hammachukiattikul, and R. Vadivel. Dynamical behavior of a delayed holling type-ii predator-prey model with predator cannibalism. \textit{Journal of Mathematics}, 2022.

\bibitem[20]{Li} Qifa Lin, Chulei Liu, Xiangdong Xie, and Yalong Xue. Global attractivity of Leslie–Gower predator-prey model incorporating prey cannibalism. \textit{Advances in Difference Equations}, 2020(1):1-15, 2020.

\bibitem[21]{Lo} N.L. Loadman, G.E.E. Moodie, and J.A. Mathias. Significance of cannibalism in larval walleye (stizostedion vitreum). \textit{Canadian Journal of Fisheries and Aquatic Sciences}, 43(3):613-618, 1986.

\bibitem[22]{Ma} Kjartan G. Magn\'usson. Destabilizing effect of cannibalism on a structured predator–prey system. \textit{Mathematical biosciences}, 155(1):61-75, 1999.

\bibitem[23]{Mc} G. M\'arquez-Couturier. \textit{Acuicultura tropical sustentable: una estrategia para la producci\'on y conservaci\'on del pejelagarto (\textit{Atractosteus tropicus}) en Tabasco, M\'exico}. UJAT, 2015.

\bibitem[24]{Mc1} G. M\'arquez-Couturier. Estado de arte de la biolog\'ia y cultivo de pejelagarto (\textit{atractosteus tropicus}). \textit{Agro Productividad}, 8(3), 2015.

\bibitem[25]{N} J.S. Nelson. \textit{Fishes of the world 4th edition} John Wiley \& sons. Nueva York, 2006.

\bibitem[26]{P} D.J. Palma-Cancino, R. Mart\'inez-Garc\'ia, C.A. \'Alvarez-Gonz\'alez, S. Camarillo-Coop, and E.S. Pe{\~n}a-Mar\'in. Esquemas de alimentaci\'on para larvicultura de pejelagarto (\textit{atractosteus tropicus gill}): crecimiento, supervivencia y canibalismo. \textit{Ecosistemas y recursos agropecuarios}, 6(17):273-281, 2019.

\bibitem[27]{Pe} Larissa Strictar Pereira, Angelo Antonio Agostinho, and Kirk O. Winemiller. Revisiting cannibalism in fishes. \textit{Reviews in fish biology and fisheries}, 27:499-513, 2017.

\bibitem[28]{Pf} David W. Pfennig. Kinship and cannibalism. \textit{Bioscience}, 47(10):667-675, 1997. 

\bibitem[29]{Pi} Anthony Graham Pienaar. A study of coeval sibling cannibalism in larval and juvenile fishes and its control under culture conditions. \textit{Rhodes University; Faculty of Science, Zoology and Entomology}, 1990.

\bibitem[30]{Po} Gary A. Polis. The evolution and dynamics of intraspecific predation. \textit{Annual Review of Ecology and Systematics}, 12(1):225-251, 1981.

\bibitem[31]{Qi} Jianguang Qin and Arlo W. Fast. Size and feed dependent cannibalism with juvenile snakehead channa striatus. \textit{Aquaculture}, 144(4):313-320, 1996.

\bibitem[32]{Re} R.E. Reis, J.S. Albert, F. Di Dario, M.M. Mincarone, P. Petry, and L.A. Rocha. Fish biodiversity and conservation in South America. \textit{Journal of fish biology}, 89(1):12-47, 2016.

\bibitem[33]{Ro} Sievert Rohwer. Parent cannibalism of offspring and egg raiding as a courtship strategy. \textit{The American Naturalist}, 112(984):429-440, 1978.

\bibitem[34]{Se} C.A. Sep\'ulveda-Quiroz, C.S. \'Alvarez-Villagomez, O. Mendoza-Porras, E.S. Pe{\~n}a-Mar\'in, C.I. Maytorena-Verdugo, G.M. P\'erez-Jim\'enez, R. Jesus-Contreras, C.A. \'Alvarez-Gonz\'alez, and R. Mart\'inez-Garc\'ia. Attack behavior leading cannibalism in tropical gar (\textit{atractosteus tropicus}) larvae under different tank colors and shelter type. \textit{Aquaculture}, 563:738991, 2023.

\bibitem[35]{Sm} Carl Smith and Peter Reay. Cannibalism in teleost fish. \textit{Reviews in Fish Biology and Fisheries}, 1:41-64, 1991.

\bibitem[36]{Sw} J. Sweetman. Managerial procedures and cost effectiveness in larval rearing. \textit{Spec Pub Eur Aquacult Soc}, 15:385, 1991.

\bibitem[37]{Ta} E. M. Takyi, K. Cooper, A. Dreher, and C. McCrorey. The (de) stabilizing effect of juvenile prey cannibalism in a stage-structured model. \textit{Mathematical Biosciences and Engineering}, 30:3355-3378, 2023.

\bibitem[38]{Va} P. Van Damme, S. Appelbaum, and T. Hecht. Sibling cannibalism in koi carp, cypvinus carpio l., larvae and juveniles reared under controlled conditions. \textit{Journal of Fish Biology}, 34:855-863, 1989.

\bibitem[39]{Za} F. Zhang, Y. Chen, and J. Li. Dynamical analysis of a stage-structured predatorprey model with cannibalism. \textit{Mathematical Biosciences}, 34:855-863, 33-41.

\end{thebibliography}
\end{document}